\DeclareMathOperator{\Id}{{Id}}
\newcommand{\F}{\mathbb{F}}
\renewcommand{\:}{\colon}
\newcommand{\eg}{{\itshape e.g.} }
\newcommand{\ie}{{\itshape i.e.} }
\newcommand{\bp}{\mathfrak{p}}
\newcommand{\bq}{\mathfrak{q}}
\newcommand{\Ram}{\mathrm{Ram}}
\renewcommand{\O}{\mathcal O}
\begin{document}

\title[Extending Frobenius Splittings]{Explicitly Extending Frobenius Splittings over Finite Maps}
\author{Karl Schwede and Kevin Tucker}

\address{Department of Mathematics\\ The Pennsylvania State University\\ University Park, PA, 16802, USA}
\email{schwede@math.psu.edu}
\address{Department of Mathematics\\ Princeton University\\Princeton, NJ, 08542, USA}
\email{kftucker@math.princeton.edu}

\subjclass[2000]{14B05, 13A35}
\keywords{Frobenius splitting, finite map, separable map, tame ramification, ramification divisor}
\thanks{The first author was partially supported by a National Science Foundation postdoctoral fellowship, RTG grant number 0502170 and NSF DMS 1064485/0969145.}
\thanks{The second author was partially supported by RTG grant number 0502170 and a National Science Foundation postdoctoral fellowship DMS 1004344.}

\begin{abstract}
Suppose that $\pi \: Y \to X$ is a finite map of normal varieties over a perfect field of characteristic $p > 0$.  Previous work of the authors gave a criterion for when Frobenius splittings on $X$ (or more generally any $p^{-e}$-linear map) extend to $Y$.  In this paper we give an alternate and highly explicit proof of this criterion (checking term by term) when $\pi$ is tamely ramified in codimension 1.  Some additional examples are also explored.
\end{abstract}
\maketitle
%\tableofcontents
\numberwithin{equation}{theorem}

\section{Introduction}
\label{sec:Introduction}

In \cite{SchwedeTuckerTestIdealFiniteMaps}, the authors describe transformation behavior for Frobenius splittings and test ideals under finite covers $\pi \: Y \to X$.  The main tool was the Grothendieck trace map, which was used to analyze extension properties of Frobenius splittings or more generally other so-called $p^{-e}$-linear maps.  This method, while necessary to account for arbitrary covers, has the disadvantage that it is rather opaque in that it is hard to see precisely how individual splittings extend from $X$ to $Y$.  However, in the case that the finite cover $\pi$ has tame ramification in codimension one, it is possible to give a direct, elementary, and explicit term by term analysis of the situation.

In this paper, we give an alternative elementary self-contained proof of the following theorem, first shown in \cite{SchwedeTuckerTestIdealFiniteMaps} (\textit{cf.} Proposition 4.4 and Theorem 5.7 in \emph{op. cit.}).

\begin{mainthm}
Suppose $\pi \: Y \to X$ is a finite surjective morphism of normal $F$-finite algebraic varieties.  Assume that $\pi$ is both separable and tamely ramified in codimension one.  Then a homomorphism $\psi \in \Hom_{\O_{X}}(F^{e}_{*}\O_{X}, \O_{X})$ extends to a homomorphism $\bar{\psi} \in \Hom_{\O_{Y}}(F^{e}_{*}\O_{Y}, \O_{Y})$ if and only if $\pi^* \Delta_{\psi} \geq \Ram_{\pi}$.  Furthermore in this case, we have $\Delta_{\bar{\psi}} = \pi^* \Delta_{\psi} - \Ram_{\pi}$.
\end{mainthm}

\noindent
Here, $\Ram_{\pi}$ is the ramification divisor of $\pi$ and $\Delta_{\psi}$ is the effective $\Q$-divisor associated to $\psi \in \Hom_{\O_{X}}(F^{e}_{*}\O_{X}, \O_{X})$ as in \cite[Section~2.5]{SchwedeTuckerTestIdealFiniteMaps} or as briefly described below.  Furthermore, in the context of our main theorem, it is easy to see that $\psi$ is a Frobenius splitting if and only if $\bar\psi$ is.

As mentioned, this result is not as general as the results from \cite{SchwedeTuckerTestIdealFiniteMaps}.  On the other hand, we feel that the proof we give here is rather valuable as it renders transparent the explicit obstruction to extending a $p^{-e}$-linear map.  Notably, the proof in this paper originally appeared as an appendix to the arXiv version of \emph{op. cit.} as they in fact \emph{preceded} and led to those more general results.

\vskip 12pt
\hskip -12pt{\it Acknowledgements:}  The authors would like to thank
Karen Smith for several inspiring discussions.

\section{Background and reduction to dimension 1}
\label{sec:Background}

To prove the main theorem, it suffices to assume that $X = \Spec(R)$ and $Y = \Spec(S)$ are affine.  Thus, consider $R \subseteq S$ a module-finite inclusion of $F$-finite
normal domains of finite type over an $F$-finite field of characteristic $p>0$. If $F \: X \to X$ is the (absolute) Frobenius endomorphism, recall that the map of rings $\O_{X} \to F_{*} \O_{X}$ is then naturally identified with the inclusion $R \subseteq R^{1/p}$ of $R$ into its ring of $p$-th roots $R^{1/p}$ (inside a fixed algebraic closure of its fraction field); similarly, the inclusion $R \subseteq R^{1/p^{e}}$ is identified with the $e$-iterated Frobenius.
Given the separability assumption on $\pi$, we have that the corresponding
inclusion of fields $K := \Frac(R) \subseteq L := \Frac(S)$
is separable.

  % Assume further that $R$ and $S$ satisfy (\ref{eqn:FUpperShriekOmegaIsOmega}); at the risk of needless repetition, we remark that property (\ref{eqn:FUpperShriekOmegaIsOmega}) is automatically satisfied when $R$ and $S$ are semi-local or are essentially of finite type over a perfect or even $F$-finite field.
%things that (still?) need referenced are: bravo/smith, hara/takagi, watanabe (blah ... normal graded ...), and hoch/hun(blah briancon-skoda, section 6.3)
We will give a criterion for when a $p^{-e}$-linear
map $\phi
\in \Hom_{R}(R^{1/p^e},R)$ on $R$ extends to a $p^{-e}$-linear
map \mbox{$\bar{\phi}
\in \Hom_{S}(S^{1/p^e}, S)$} on $S$, \ie such that the following diagram
commutes:
\begin{center}
\begin{minipage}[b]{\linewidth}
\[  \xymatrix{
S^{1/p^e} \ar^{\bar{\phi}}[r] & S
}\]

\vspace{-.5cm}
\begin{equation}
\label{eq:extndiag}
 \xymatrix{
 \rotatebox[origin=c]{90}{$\subseteq$} & \quad \rotatebox[origin=c]{90}{$\subseteq$}
 }\end{equation}

\vspace{-.5cm}
\[ \xymatrix{
R^{1/p^e} \ar_{\phi}[r] & R
}. \]
\end{minipage}
\end{center}

Let us give a quick example.

\begin{example}
\label{ex:y=x4}
  Suppose $R = \F_{3}[y] \subseteq \F_{3}[x,y]/(y - x^{4}) \simeq
  \F_{3}[x] = S$.  Then $R^{1/3}$ is a free \mbox{$R$-module} with basis $1,
  y^{1/3}, y^{2/3}$, and similarly for $S^{1/3}$.  Thus, $\phi \in
  \Hom_{R}(R^{1/3},R)$ is uniquely determined by $\phi(1),
  \phi(y^{1/3}), \phi(y^{2/3})$, which can be arbitrary elements of
  $R$.  We identify $\phi$ with its generic extension $\phi \: K^{1/3}
  \to K$ to $K = \Frac(R)$, and denote by $\bar\phi$ the unique extension of $\phi$ to
  $L = \Frac(S)$ as in Lemma \ref{lem:genericextension} below.  We have
\[
\begin{array}{rcl}
    \bar\phi(1) & = & \phi(1) \\
    \bar\phi(x^{1/3}) & = & \bar\phi({1 \over x} x^{4/3}) = \frac{1}{x}
    \phi(y^{1/3}) \\
    \bar\phi(x^{2/3}) & = & \bar\phi({1 \over x^2} x^{8/3}) = \frac{1}{x^2} \phi(y^{2/3})
\end{array}
\]
and it follows that an extension of $\phi$ from $R$ to $S$ as in
\eqref{eq:extndiag} exists if and only if
\begin{itemize}
\item{} $\phi(y^{1/3})$ is divisible by $x$ in $S$ and,
\item{}  $\phi(y^{2/3})$ is divisible by $x^2$ in $S$.
\end{itemize}
Note that $\phi(y^{1/3})$ is divisible by $x = y^{1/4}$ in $S$ if and only if $\phi(y^{1/3})$ is divisible by $y = x^4$ in $R$.  Likewise $\phi(y^{2/3})$ is divisible by $x^2 = y^{2/4}$ in $S$ if and only if $\phi(y^{2/3})$ is divisible by $y = x^4$ in $R$.
\end{example}

Our eventual goal is to generalize the work in the above example to any inclusion of $F$-finite normal domains and in particular to obtain an explicit criterion involving the ramification divisor.  In particular, our final description will be given in terms of the divisor associated to
$\Delta_{\phi}$ on $X = \Spec R$ as in \cite[Section 2.5]{SchwedeTuckerTestIdealFiniteMaps}.  We now briefly recall the construction of this divisor.

The $F^e_* \O_X$-module $\sHom_{\O_X}(F^e_* \O_X, \O_X)$ is isomorphic to $F^e_* \O_X((1-p^e)K_X)$ and so a section $\phi \in \sHom_{\O_X}(F^e_* \O_X, \O_X)$ corresponds to an effective Weil divisor $D_{\phi}$ linearly equivalent to $(1-p^e)K_X$.  This divisor is ubiquitous throughout the theory of Frobenius splitting \cite{BrionKumarFrobeniusSplitting}.  We set
\[
\Delta_{\phi} = {1 \over p^e - 1} D_{\phi}.
\]
The division by $p^e - 1$ normalizes $D_{\phi}$ with respect to $e$ and $p$ and also forces $K_X + \Delta$ to be $\bQ$-Cartier, a common assumption for $\bQ$-divisors throughout higher dimensional algebraic geometry.

Our goal for the remainder of this section is to reduce the question of our main theorem to the case where $R$ and $S$ are one dimensional.  First we recall how to extend $\phi$ in the case where $R$ and $S$ are fields.  The proof is easy and so we include it for the convenience of the reader.  We remark that if the extension of fields is \emph{not} separable, then no non-zero $p^{-e}$-linear map extends by \cite[Proposition 5.2]{SchwedeTuckerTestIdealFiniteMaps}.

\begin{lemma}\cite[Lemma 3.3]{SchwedeTuckerTestIdealFiniteMaps}
\label{lem:genericextension}
Suppose that $K \subseteq L$ is a finite separable field extension
  of $F$-finite fields with characteristic $p>0$.  If
\[
 e_{1}^{1/p^e}, \ldots, e_{n}^{1/p^e}
 \]
 form a basis for $K^{1/p^e}$ over $K$, then they also form a basis for $L^{1/p^e}$ over $L$. Furthermore, any
  map $\phi \in \Hom_{K}(K^{1/p^e},K)$ extends uniquely
  to a map $\bar{\phi} \in \Hom_{L}(L^{1/p^e}, L)$.
\end{lemma}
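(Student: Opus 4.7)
The plan is to reduce everything to the statement that base change along the separable extension $K \subseteq L$ commutes with the $p^e$-th root operation. Concretely, I would first prove the isomorphism
\[
L \otimes_{K} K^{1/p^e} \xrightarrow{\ \sim\ } L^{1/p^e}, \qquad \ell \otimes k^{1/p^e} \longmapsto \ell \cdot k^{1/p^e},
\]
as $L$-algebras. The map is well-defined and surjective because the right-hand side equals the compositum $L \cdot K^{1/p^e}$ inside a fixed algebraic closure: any element of $L^{1/p^e}$ is an $L$-linear combination of $p^e$-th roots of elements of $L$, and these can be written via a primitive element of $L/K$ in terms of $p^e$-th roots of elements of $K$.

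For injectivity, I would use linear disjointness: $L/K$ is separable while $K^{1/p^e}/K$ is purely inseparable, so the two subfields of the algebraic closure of $K$ are linearly disjoint over $K$. Hence $\dim_L (L \otimes_K K^{1/p^e}) = [K^{1/p^e}:K]$ equals $[L \cdot K^{1/p^e} : L] = [L^{1/p^e}:L]$, where the last equality uses the same linear disjointness applied to the primitive element generating $L/K$ (its minimal polynomial over $K$ remains irreducible over $K^{1/p^e}$). A dimension count then upgrades surjectivity to a bijection.

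The basis assertion is then immediate: tensoring the $K$-basis $e_1^{1/p^e}, \ldots, e_n^{1/p^e}$ of $K^{1/p^e}$ with $L$ over $K$ produces an $L$-basis of $L \otimes_K K^{1/p^e}$, which transports under the isomorphism above to an $L$-basis of $L^{1/p^e}$. For the extension of $\phi$, I would take $\bar\phi$ to be the composition
\[
L^{1/p^e} \xrightarrow{\ \sim\ } L \otimes_K K^{1/p^e} \xrightarrow{\ \id_L \otimes \, \phi\ } L \otimes_K K = L,
\]
which is manifestly $L$-linear and restricts to $\phi$ on $K^{1/p^e}$. Uniqueness is forced by $L$-linearity: since the $e_i^{1/p^e}$ form an $L$-basis of $L^{1/p^e}$, any $L$-linear extension must send $e_i^{1/p^e}$ to $\phi(e_i^{1/p^e}) \in K \subseteq L$, and these values determine the map.

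The main (and essentially only) obstacle is the linear disjointness step; everything else is formal once that isomorphism is in hand.
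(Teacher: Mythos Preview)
Your proposal is correct and follows essentially the same route as the paper: both arguments hinge on the linear disjointness of the separable extension $L/K$ and the purely inseparable extension $K^{1/p^e}/K$ to obtain $L^{1/p^e} \cong L \otimes_K K^{1/p^e}$, and then define $\bar\phi = \phi \otimes_K \Id_L$ with uniqueness forced by the basis. The paper simply cites the linear disjointness from a reference and states the consequences, whereas you spell out the surjectivity and dimension count explicitly; but there is no substantive difference in strategy.
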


\begin{proof}
 Since $K^{1/p^e}$ is purely inseparable over $K$, it follows that
 $K^{1/p^e}$ and $L$ are linearly disjoint over $K$ (\eg Example 20.13
 in \cite{Mor96}).  Thus, $L^{1/p^e} = K^{1/p^e} \tensor_{K}
 L$, and the first statement is obvious.  For the second, note that
 the extension $\bar\phi = \phi \tensor_{K} \Id_{L}$ exists and is
 uniquely determined by the property that $\bar\phi(e_{i}^{1/p^e}) =
 \phi(e_{i}^{1/p^e})$ for all $1 \leq i \leq n$.
\end{proof}

We will also need the following lemma.

\begin{lemma} \cite[Lemma 4.7]{SchwedeTuckerTestIdealFiniteMaps}
\label{lem:CharacterizationOfGeneratingMap}
Let $R \subseteq S$ be a module-finite inclusion of domains with
corresponding fraction fields $K \subseteq L$.
Suppose that $\Phi \in \Hom_{K}(L,K)$ satisfies $\Phi(S) \subseteq R$, and
that $\Phi|_{S} \: S \to R$ generates $\Hom_R(S, R)$ as an $S$-module.
If $x \in L$ is such that $\Phi(x S) \subseteq R$, then $x \in S$.
\end{lemma}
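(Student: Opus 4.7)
The plan is to use the generating hypothesis on $\Phi|_{S}$ to locate a candidate $s_{0} \in S$ with $\Phi((x-s_{0})\cdot -) \equiv 0$ on $S$, and then to promote this vanishing from $S$ to all of $L$ via the $K$-linearity of $\Phi$.

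First, I would define $\Phi_{x} \: S \to R$ by $\Phi_{x}(s) = \Phi(xs)$. This is well-defined into $R$ by the hypothesis $\Phi(xS) \subseteq R$, and it is $R$-linear because $\Phi$ is $K$-linear and $R \subseteq K$. Since $\Phi|_{S}$ generates $\Hom_{R}(S, R)$ as an $S$-module, there exists $s_{0} \in S$ with $\Phi_{x} = s_{0} \cdot \Phi|_{S}$; equivalently, $\Phi(xs) = \Phi(s_{0} s)$ for all $s \in S$. Using $K$-linearity of $\Phi$ this rearranges to
\[
\Phi\bigl((x - s_{0})\, s\bigr) = 0 \quad \text{for all } s \in S.
\]
Setting $y = x - s_{0} \in L$, it suffices to show $y = 0$, for then $x = s_{0} \in S$.

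The key step is to upgrade the above vanishing from $S$ to $L$. I would observe that $L = \Frac(S)$ is spanned by $S$ as a $K$-vector space: any nonzero $t \in S$ is integral over $R$, so choosing a minimal integral relation $t^{n} + r_{n-1} t^{n-1} + \cdots + r_{0} = 0$ with $r_{0} \neq 0$ (which one may always arrange in a domain by cancelling powers of $t$) expresses $1/t$ in $K \cdot S$. Hence every $s/t \in L$ lies in $K \cdot S$. Since $\Phi$ is $K$-linear and vanishes on $yS$, it therefore vanishes on $y \cdot (K \cdot S) = yL$. But if $y \neq 0$, then $yL = L$ because $L$ is a field, forcing $\Phi \equiv 0$ on $L$; this contradicts the fact that $\Phi|_{S}$ generates $\Hom_{R}(S, R)$ and in particular is nonzero. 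Thus $y = 0$ and $x = s_{0} \in S$.

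The only mildly nontrivial point is confirming that $S$ spans $L$ over $K$, or equivalently that the natural map $K \otimes_{R} S \to L$ is an isomorphism; once this is in hand, the remainder of the argument is formal. I do not expect a genuine obstacle beyond this bookkeeping.
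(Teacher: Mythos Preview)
Your argument is correct and follows essentially the same route as the paper's proof: both define the map $s \mapsto \Phi(xs)$ on $S$, use the generating hypothesis to find $s_{0} \in S$ with $\Phi(xs) = \Phi(s_{0}s)$ for all $s \in S$, and then extend this identity from $S$ to $L$ to force $x = s_{0}$. The paper records the last step tersely by invoking that $\Hom_{K}(L,K)$ is a one-dimensional $L$-vector space, whereas you unpack it by showing $L = K \cdot S$ and arguing directly that $\Phi$ would vanish identically; these are the same idea, and your version simply makes explicit the passage from $S$ to $L$ that the paper leaves implicit.
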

\begin{proof}
The map $\phi(\blank) := \Phi(x \cdot \blank)$ can be viewed as an
element of $\Hom_R(S, R)$.  Therefore $\phi(a) = \Phi(s a)$ for some
$s$ in $S$ and all $a \in S$.  But then $\Phi(s l) = \Phi(x l)$ for
all $l \in L$, which implies that $s = x$ as $\Hom_K(L, K)$ is a
one dimensional vector space over $L$.
\end{proof}

Finally as promised, we use localization to reduce our extension question to
codimension one.% \ie to the case of a discrete valuation ring (DVR).

\begin{lemma} \cite[Lemma 3.6]{SchwedeTuckerTestIdealFiniteMaps}
\label{lem:reductcodimoneaffine}
  Suppose $R \subseteq S$ is a generically separable module-finite inclusion
  of $F$-finite normal domains with characteristic $p>0$ and $\phi \in
  \Hom_{R}(R^{1/p^e},R)$.  Then $\phi$ extends to
  $\bar\phi \in \Hom_{S}(S^{1/p^e},S)$ if and only if an extension exists
  in codimension one.  In other words, for each height one prime $\bq$
  of $S$ lying over $\bp$ in $R$, we have a commutative diagram
\begin{center}
\begin{minipage}{\linewidth}
\[  \xymatrix{
(S_{\bq})^{1/p^e} \ar^-{\bar{\phi}}[r] & S_{\bq}
}\]

\vspace{-.5cm}
\begin{equation*}
 \xymatrix{
 \rotatebox[origin=c]{90}{$\subseteq$} & \quad \quad \rotatebox[origin=c]{90}{$\subseteq$}
 }\end{equation*}

\vspace{-.5cm}
\[ \xymatrix{
(R_{\bp})^{1/p^e} \ar_-{\phi}[r] & R_{\bp}
}. \]
\end{minipage}
\end{center}
\end{lemma}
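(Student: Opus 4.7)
The forward direction is immediate: if $\bar\phi \in \Hom_S(S^{1/p^e}, S)$ extends $\phi$, then for any height one prime $\bq$ of $S$ lying over $\bp$ in $R$, localizing $\bar\phi$ at $\bq$ produces the required commutative square, because localization is compatible with $p^{-e}$-linear maps.

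For the reverse direction, the plan is to glue the hypothesized codimension one extensions via the unique generic extension supplied by Lemma~\ref{lem:genericextension}. First, apply Lemma~\ref{lem:genericextension} to the separable field extension $K \subseteq L$ to obtain the unique map $\tld\phi \in \Hom_L(L^{1/p^e}, L)$ extending $\phi$. Given any height one prime $\bq$ of $S$ lying over $\bp$ in $R$, the hypothesized local extension $\bar\phi_\bq \: (S_{\bq})^{1/p^e} \to S_{\bq}$ in turn localizes to a map $L^{1/p^e} \to L$ extending $\phi \: K^{1/p^e} \to K$. By the uniqueness in Lemma~\ref{lem:genericextension}, this localization must coincide with $\tld\phi$. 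Consequently, $\tld\phi\bigl((S_{\bq})^{1/p^e}\bigr) \subseteq S_{\bq}$ for every height one prime $\bq$ of $S$.

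To conclude, observe that for any $s \in S \subseteq S^{1/p^e}$, the element $\tld\phi(s^{1/p^e}) \in L$ lies in $S_{\bq}$ for every height one prime $\bq$ of $S$. Since $S$ is a normal domain,
\[
S \;=\; \bigcap_{\height \bq \,=\, 1} S_{\bq}
\]
inside $L$, so $\tld\phi(s^{1/p^e}) \in S$. Thus restricting $\tld\phi$ to $S^{1/p^e}$ produces the desired $\bar\phi \in \Hom_S(S^{1/p^e}, S)$ extending $\phi$. The only real content of the argument is the use of uniqueness of the generic extension to ensure that the a priori independent local choices $\bar\phi_\bq$ assemble coherently; beyond that, the proof reduces to the standard fact that a normal ring equals the intersection of its localizations at height one primes, and there is no substantive obstacle.
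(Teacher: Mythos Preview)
Your argument is correct and follows essentially the same route as the paper: extend $\phi$ uniquely to $\tld\phi\colon L^{1/p^e}\to L$ via Lemma~\ref{lem:genericextension}, then use normality of $S$ to write $S=\bigcap_{\height\bq=1}S_{\bq}$ and conclude $\tld\phi(S^{1/p^e})\subseteq S$. The paper's proof is terser, leaving the forward direction and the uniqueness-compatibility step implicit, but the content is the same.
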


\begin{proof}
  %Since $Y = \Spec(S)$ is normal, the above statements follow immediately from the  fact that $\Hom_{\O_{Y}}(F^{e}_{*}\O_{Y},\O_{Y})$ is reflexive and  thus its sections are determined outside of a codimension two subset of $\Spec(S)$  (see for example \cite[Proposition  1.11]{HartshorneGeneralizedDivisorsOnGorensteinSchemes}).
%We give an  alternative argument which we find instructive.
Identify $\phi$ with its generic extension $\phi \: K^{1/p^e} \to   K$ to $K = \Frac(R)$, and denote by $\bar\phi$ the unique extension  of $\phi$ to $L = \Frac(S)$ as in Lemma \ref{lem:genericextension}.  Then, an extension of $\phi$ to $S$ exists if and only if  $\bar\phi(S^{1/p^e}) \subseteq S$.  Since $S$ is normal,  $S$ is the intersection of all of the subrings $S_{\bq} \subseteq L$  for each height one prime $\bq$ of $S$, and the conclusion follows at once.
\end{proof}

\section{On explicitly extending $p^{-e}$ linear maps under tame ramification}
\label{sec:explicitlifting}

Our goal in this section is to give a highly explicit description of
how to extend $p^{-e}$-linear maps in separable extensions of normal domains which
are tamely ramified in codimension one.  In doing so, we will prove our main theorem.  We first recall the definition of tame ramification:

\begin{definition}  \cite[Chapter 2]{GrothendieckMurreTheTameFundamentalGroup} \label{defn:tameramification}
 A local inclusion $(R, \bp) \subseteq (S, \bq)$ of DVR's is called \emph{tamely ramified} if:
\begin{itemize}
 \item[(1)]  It is generically finite and generically separable.
 \item[(2)]  The extension of residue fields $k(\bp) \subseteq k(\bq)$ is separable.
 \item[(3)]  A local parameter $r$ of $R$, when viewed as an element of $S$, has order of vanishing (with respect to the valuation of $S$) not divisible by $p$.
\end{itemize}
More generally, a generically separable module-finite inclusion of
normal domains $R \subseteq S$ will be called \emph{tamely ramified in
  codimension one} if all of the associated DVR extensions given by
localizing at height one primes are tamely ramified.  A generically
finite and separable inclusion of DVR's which is not tamely ramified is called \emph{wildly ramified}.  In this context, the \emph{ramification index} is the order of vanishing of $r$ in $S$ (with respect to the valuation of $S_{\bq}$).
Finally, if $R \subseteq S$ is tamely ramified with index $n$ and $s$ is a local parameter of $S$, then the ramification divisor $\Ram_{\pi}$ on $Y = \Spec S$ is simply $(n-1) \Div(s)$.
\end{definition}

By Section~\ref{sec:Background}, the problem reduces to codimension one,
and so we will need only consider a tamely ramified generically finite extension of  DVR's $R \subseteq S$.
Before proceeding, we would like to briefly outline the strategy.

\begin{strategy*}
For particular $e$, we will be able to find a map $\phi : R^{1/p^e} \to R$ whose extension to $S$ generates $\Hom_S(S^{1/p^e}, S)$ as an $S$-module in Proposition \ref{prop:explictMapLiftsToGenerator}.  Those $\psi \in \Hom_R(R^{1/p^e}, R)$ which extend to $S$ will be exactly those of the form $\psi(\blank) = \phi(c^{1/p^e} \cdot \blank)$ for some $c^{1/p^e} \in R^{1/p^e}$.  In particular, this $\phi$ will create a \emph{line-in-the-sand} that determines which $\psi$ extend to $S$.  The proof of our main theorem is then completed by using Lemma \ref{lem.CompositionExtendsIfAndOnlyIf} to reduce the general case to these particular values for $e$.
\end{strategy*}

Our first order of business is to explicitly write down a basis for $R^{1/p^e}$ over $R$.

\begin{lemma}
\cite[Lemma 3.6]{SchwedeTuckerTestIdealFiniteMaps}
\label{lem:bases}
Let $R$ be a DVR with parameter $r$, characteristic $p>0$, and $F$-finite residue field $k$.  For each $i = 0, \ldots, p^e-1$, suppose that $u_{1,i}^{1/p^e}, \ldots, u_{m,i}^{1/p^e}$ are elements of $R^{1/p^e}$ whose images $\bar{u}_{j,i}^{1/p^e} \in k^{1/p^e} = R^{1/p^e}/ r^{1/p^e}R^{1/p^e}$ form a basis for $k^{1/p^e}$ over $k$.  Then
\[
\begin{array}{c}
u_{1,0}^{1/p^e}, \ldots, u_{m,0}^{1/p^e}, \\[.05in]
u_{1,1}^{1/p^e}r^{1/p^e}, \ldots, u_{m,1}^{1/p^e}r^{1/p^e}, \\
\vdots \\
u_{1,i}^{1/p^e}r^{i/p^e}, \ldots, u_{m,i}^{1/p^e}r^{i/p^e},\\
\vdots \\
u_{1,p^e-1}^{1/p^e}r^{(p^e-1)/p^e}, \ldots, u_{m,p^e-1}^{1/p^e}r^{(p^e-1)/p^e},\\
\end{array}
\]
are a free basis for $R^{1/p^e}$ over $R$.
\end{lemma}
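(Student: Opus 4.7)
\medskip

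\noindent\textbf{Proof proposal.}
The plan is to reduce the problem to a Nakayama-style graded argument, exploiting that $R^{1/p^e}$ is itself a DVR. Because the map $R \to R^{1/p^e}$, $a \mapsto a^{1/p^e}$, is a ring isomorphism, $R^{1/p^e}$ is a DVR with uniformizer $r^{1/p^e}$ and residue field $k^{1/p^e}$. I first note that $R^{1/p^e}$ is a finitely generated $R$-module (by $F$-finiteness) and torsion-free, hence free of some rank $N$ over the DVR $R$; in particular $N = \dim_k(R^{1/p^e}/rR^{1/p^e})$.

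Next I would compute this dimension via the filtration by powers of the maximal ideal of $R^{1/p^e}$. Since $r = (r^{1/p^e})^{p^e}$, the descending chain $(r^{1/p^e})^i R^{1/p^e}/rR^{1/p^e}$ for $i = 0, \ldots, p^e$ filters $R^{1/p^e}/rR^{1/p^e}$ with successive quotients each isomorphic to $k^{1/p^e}$, a $k$-vector space of dimension $m$. Thus $N = mp^e$, matching the number of proposed basis elements.

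The crucial observation is then that $u_{j,i}^{1/p^e}\, r^{i/p^e}$ lies in the $i$-th step of this filtration, and since $u_{j,i}^{1/p^e}$ is a unit in $R^{1/p^e}$ (its image $\bar u_{j,i}^{1/p^e}$ being nonzero in $k^{1/p^e}$), its image in the graded piece $(r^{1/p^e})^i R^{1/p^e}/(r^{1/p^e})^{i+1}R^{1/p^e} \cong k^{1/p^e}$ is precisely $\bar u_{j,i}^{1/p^e}$. The hypothesis on the $\bar u_{j,i}^{1/p^e}$ then says exactly that the proposed elements descend to a $k$-basis of the associated graded, and hence of $R^{1/p^e}/rR^{1/p^e}$. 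Nakayama's lemma then shows that these $mp^e$ elements generate $R^{1/p^e}$ as an $R$-module; since they generate a free module of rank $mp^e$ with the matching number of generators (the resulting surjection $R^{mp^e} \twoheadrightarrow R^{1/p^e}$ has a finitely generated, torsion-free, rank-zero kernel), they constitute a free basis. I anticipate no real obstacle: the hypothesis has been engineered so that each graded piece of the filtration is handled tautologically.
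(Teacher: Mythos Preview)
Your proposal is correct and follows essentially the same route as the paper: reduce via Nakayama to showing the images form a $k$-basis of $R^{1/p^e}/rR^{1/p^e}$, use the filtration by powers of $r^{1/p^e}$ whose graded pieces are each $k^{1/p^e}$, and match the count. The only cosmetic difference is that the paper invokes Kunz's theorem for freeness of $R^{1/p^e}$ over $R$, whereas you use the more elementary fact that a finitely generated torsion-free module over a DVR is free; either works here.
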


\begin{proof}
 Since $R$ is regular, we have from \cite{KunzOnNoetherianRingsOfCharP} that $R^{1/p^e}$ is a free $R$-module.  Thus, by Nakayama's~lemma, it is enough to show that the images of the elements in the above list form a basis for the vector space $R^{1/p^e}/rR^{1/p^e}$ over $k$.  Consider the filtration
 \[
 rR^{1/p^e} \subsetneq r^{(p^e-1)/p^e}R^{1/p^e} \subsetneq \cdots \subsetneq r^{i/p^e}R^{1/p^e} \subsetneq \cdots \subsetneq r^{1/p^e}R^{1/p^e} \subsetneq R^{1/p^e}.
 \]
 Multiplication by $r^{i/p^e}$ induces an isomorphism
 \[
 k^{1/p^e} = R^{1/p^e}/rR^{1/p^e} \cong r^{i/p^e}R^{1/p^e} \big/ r^{(i+1)/p^e}R^{1/p^e}
 \]
 for each $i = 0, \ldots, p^e-1$.  It follows immediately that
 \[
 r^{i/p^e}R^{1/p^e} = u_{1,i}^{1/p^e}r^{i/p^e}R + \cdots + u_{m,i}^{1/p^e}r^{i/p^e}R + r^{(i+1)/p^e}R^{1/p^e}
 \]
 and, arguing inductively, it is clear that the images of the elements in the above list span $R^{1/p^e}/rR^{1/p^e}$.  Additionally, we can conclude from the above filtration that the dimension of $R^{1/p^e}/rR^{1/p^e}$ over $k$ equals $p^e \cdot [k^{1/p^e}:k]$.   Since there are precisely $p^e \cdot [k^{1/p^e}:k]$ elements in the above list, the conclusion now follows.
 \end{proof}

Once we have fixed a basis of $R^{1/p^e}$ over $R$ (a DVR) we can very explicitly write down an $R^{1/p^e}$-module generator of $\Hom_R(R^{1/p^e}, R)$.

\begin{lemma}
\label{lem:GenMapDescription}
Fix an $F$-finite DVR $R$ and suppose that $u_{j, i}^{1/p^e} r^{i/p^e}$ where $i = 0, \dots, p^e-1$, $j = 1, \dots, m$ is a basis for $R^{1/p^e}$ over $R$ as described in Lemma \ref{lem:bases}.  Fix one of the basis elements $x^{1/p^e} = (u_{j, p^e-1} r^{p^e-1})^{1/p^e}$ and suppose that we have a map $\phi_x \: R^{1/p^e} \to R$ which sends $x^{1/p^e}$ to $1$.  Then $\phi_x$ generates $\Hom_R(R^{1/p^e}, R)$ as an $R^{1/p^e}$-module.
\end{lemma}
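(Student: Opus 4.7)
The strategy is to apply Nakayama's lemma over the local ring $R^{1/p^e}$, which is itself a DVR with uniformizer $r^{1/p^e}$ (so $\m := r^{1/p^e}R^{1/p^e}$ is its maximal ideal) and residue field $k^{1/p^e}$. Writing $H := \Hom_R(R^{1/p^e},R)$, observe that $H$ is finitely generated as an $R^{1/p^e}$-module since it is already finitely generated over the subring $R$. Thus, to show $\phi_x$ generates $H$, it suffices to verify (a) that $H/\m H$ is one-dimensional as a $k^{1/p^e}$-vector space, and (b) that the image of $\phi_x$ in $H/\m H$ is nonzero.

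For (a), I would argue that $H \cong R^{1/p^e}$ as $R^{1/p^e}$-modules. The natural $R^{1/p^e}$-linear injection $H \hookrightarrow \Hom_K(K^{1/p^e},K)$ (where $K = \Frac R$) identifies $H$ with an $R^{1/p^e}$-submodule of $\Hom_K(K^{1/p^e},K)$; this target is one-dimensional over $K^{1/p^e}$ because $K \subseteq K^{1/p^e}$ is a finite field extension and a dimension count gives $\dim_{K^{1/p^e}}\Hom_K(K^{1/p^e},K) = [K^{1/p^e}:K]/[K^{1/p^e}:K] = 1$. Thus $H$ identifies with a nonzero finitely generated $R^{1/p^e}$-submodule of a one-dimensional $K^{1/p^e}$-vector space, i.e.\ a nonzero fractional ideal of the DVR $R^{1/p^e}$, and any such is abstractly isomorphic to $R^{1/p^e}$.

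For (b), I would argue by contradiction. Suppose $\phi_x = r^{1/p^e}\psi$ for some $\psi \in H$. Evaluating at $x^{1/p^e}$ yields
\[
1 = \phi_x(x^{1/p^e}) = \psi(r^{1/p^e}\cdot x^{1/p^e}).
\]
The key arithmetic observation is that
\[
r^{1/p^e}\cdot x^{1/p^e} \;=\; r^{1/p^e}\cdot u_{j,p^e-1}^{1/p^e}\cdot r^{(p^e-1)/p^e} \;=\; r\cdot u_{j,p^e-1}^{1/p^e},
\]
so by $R$-linearity $\psi(r^{1/p^e}\cdot x^{1/p^e}) = r\cdot \psi(u_{j,p^e-1}^{1/p^e}) \in rR$, contradicting $1 \notin rR$.

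The principal obstacle is really step (a); without the cyclicity of $H$, Nakayama would only yield that $\phi_x$ generates some submodule, not all of $H$. Once cyclicity is established, step (b) is transparent, and it makes clear precisely why the statement must take $x^{1/p^e}$ to involve the top power $r^{(p^e-1)/p^e}$: this is exactly the feature that forces $r^{1/p^e}\cdot x^{1/p^e}$ to land in $r\cdot R^{1/p^e}$ and deliver the contradiction.
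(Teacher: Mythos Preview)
Your proof is correct and takes a genuinely different route from the paper. The paper passes to the completion $R = k[[r]]$, constructs an explicit ``standard'' generator $\psi$ of $H = \Hom_R(R^{1/p^e},R)$ using a chosen basis $v_1^{1/p^e},\dots,v_m^{1/p^e}$ of $k^{1/p^e}$ over $k$, and then writes $\phi_x(u_{j,p^e-1}^{1/p^e}\cdot\blank) = \psi(s^{1/p^e}\cdot\blank)$ for an explicit element $s^{1/p^e}$ whose residue in $k^{1/p^e}$ is computed and shown to be nonzero. By contrast, you argue structurally: recognizing that $R^{1/p^e}$ is itself a DVR, you identify $H$ with a nonzero fractional ideal of $R^{1/p^e}$ (hence cyclic), and then Nakayama reduces the question to the single evaluation $\phi_x(x^{1/p^e})=1\notin rR$. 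Your argument is shorter, avoids completion, and sidesteps the bookkeeping with the auxiliary elements $\alpha_j^{1/p^e}$; the paper's argument, in keeping with its explicit theme, has the advantage of producing the actual unit relating $\phi_x$ to the standard generator. Both make visible that the hypothesis $i = p^e-1$ is exactly what is needed: in your version it forces $r^{1/p^e}\cdot x^{1/p^e}\in rR^{1/p^e}$, while in the paper's version it is what makes the residue computation succeed.
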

\begin{proof}
Since $u_{j, p^e-1}^{1/p^{e}}$ is a unit in $R^{1/p^{e}}$, it suffices
to show that $\phi (\blank) :=\phi_{x}(u_{j, p^e-1}^{1/p^{e}}
\cdot \blank)$ generates $\Hom_R(R^{1/p^e}, R)$ as an
$R^{1/p^e}$-module.  This statement may be checked after completion, and thus we may
assume that $R =
k[[r]]$.
Fix a basis $1={v_1}^{1/p^e},{v_2}^{1/p^e}, \dots, {v_m}^{1/p^e}$ for
$k^{1/p^e}$ over $k$ (and note these are elements of $R^{1/p^e}$ since
$R$ contains $k$).  Let $\sigma_{j} \in \Hom_{k}(k^{1/p^{e}},k)$
denote the projection onto $v_{j}$ for $j = 1, \ldots, m$.  Since
$\sigma_{j} \in \Hom_{k}(k^{1/p^{e}},k) \simeq k^{1/p^{e}}$ is a
one-dimensional vector space over $k^{1/p^{e}}$, there exists
$\alpha_{j}^{1/p^{e}} \in k^{1/p^{e}}$ with $\sigma_{j}(\blank) =
\sigma_{1}(\alpha_{j}^{1/p^{e}} \cdot \blank)$ for each $j$. As
$\sigma_{1}, \ldots, \sigma_{m}$ are a basis for
$\Hom_{k}(k^{1/p^{e}},k)$ over $k$, it follows that
$\alpha_{1}^{1/p^{e}}, \ldots,
\alpha_{m}^{1/p^{e}}$ are another basis for $k^{1/p^e}$ over $k$.

 Consider
the map $\psi \in \Hom_R(R^{1/p^e}, R)$ which sends
$r^{(p^e-1)/p^e}$ to $1$ and the other elements of the alternate basis
$\{v_j^{1/p^e} r^{i/p^e} \}$ where $j = 1, \dots, m$ and $i = 0, \dots, p^e-1$
to zero.  It is easy to see that $\psi$ generates $\Hom_R(R^{1/p^e},
R)$ as an $R^{1/p^e}$-module.  Indeed,
$\psi(\alpha_{j}^{1/p^{e}}r^{(p^e-1 - i)/p^{e}} \cdot \blank )$ sends
$v_j^{1/p^e} r^{i/p^e}$ to $1$ and the other (alternate) basis
elements to zero.
Now, notice that $\phi(\blank) = \psi(s^{1/p^e} \cdot \blank)$ where
\[
s^{1/p^{e}} = \sum_{i,j} \phi(v_{j}^{1/p^{e}}r^{i/p^{e}})\alpha_{j}^{1/p^{e}} r^{(p^{e}-1-i)/p^{e}}
\]
and we will be done if $s^{1/p^{e}}$ is a unit in
$R^{1/p^{e}}$.  Denoting images in $k^{1/p^e} = R^{1/p^e}/
r^{1/p^e}R^{1/p^e}$ with a bar, we have
\[
\overline{s^{1/p^{e}}} = \sum_{j}
\overline{\phi(v_{j}^{1/p^{e}}r^{i/p^{e}})\alpha_{j}^{1/p^{e}}} = \sum_{j}
\left(\overline{\phi(v_{j}^{1/p^{e}}r^{i/p^{e}})}\right) \alpha_{j}^{1/p^{e}}
\]
which is non-zero as $\alpha_{1}^{1/p^{e}}, \ldots,
\alpha_{m}^{1/p^{e}}$ are linearly independent over $k$ and $\overline{\phi(v_{j}^{1/p^{e}}r^{i/p^{e}})} \in
k$ with $\overline{\phi(v_{1}^{1/p^{e}}r^{i/p^{e}})} =
\overline{\phi(r^{i/p^{e}})} = \bar{1}=1$.
%
\begin{comment}
Set $N \subseteq \Hom_R(R^{1/p^e}, R)$ to be the $R^{1/p^e}$-submodule
generated by $\phi_x$.  We will show that this inclusion is equality.
Tensoring with the completion of $R$ allows us to assume that $R =
k[[r]]$ so that $x^{1/p^e} = (w r^{p^e-1})^{1/p^e}$ for some unit $w
\in R$.  Fix a basis ${v_1}^{1/p^e}, \dots, {v_m}^{1/p^e}$ for
$k^{1/p^e}$ over $k$ (and note these are elements of $R^{1/p^e}$ since
$R$ contains $k$).  Consider the map $\psi$ which sends
$r^{(p^e-1)/p^e}$ to $1$ and the other elements of the alternate basis
$\{v_j^{1/p^e} r^{i/p^e} \}$ where $j = 1, \dots, m$ and $i = 0, \dots, p^e-1$
to zero.  We will show that $\psi$ generates $\Hom_R(R^{1/p^e}, R)$ as an $R^{1/p^e}$-module.  But this is easy, since $\psi( (v_j)^{-1/p^e} r^{(p^e-1 - i)/i} \cdot \blank)$ sends $v_j^{1/p^e} r^{i/p^e}$ to $1$ and the other (alternate) basis elements to zero.

Now, notice that $\phi_x(\blank) = \psi_x(s^{1/p^e} \times \blank)$, so if we can show that $s^{1/p^e}$ is a unit, we will be done.  But,
\[
\phi_x\left((u_{j, p^e-1} r^{p^e-1})^{1/p^e} \right) = 1
\]
so that $\psi_x(u_{j, p^e-1}^{1/p^e} s^{1/p^e} \times \blank) =
\phi_x(u_{j, p^e-1}^{1/p^e} \times \blank)$ sends $r^{(p^e-1)/p^e}$ to
$1$.  It is then straightforward to verify that $u_{j, p^e-1}^{1/p^e}
s^{1/p^e}$ is a unit so that $s^{1/p^e}$ is also a unit as desired.
\end{comment}
\end{proof}

Now suppose we have a (not necessarily finite) local extension of DVR's $R \subseteq S$ which is
generically finite and separable, as well as tamely ramified.  We identify a map $\phi \in \Hom_R(R^{1/p^e}, R)$ which extends to a map $\bar{\phi} \in \Hom_S(S^{1/p^e}, S)$ generating $\Hom_S(S^{1/p^e}, S)$ as an \mbox{$S^{1/p^e}$-module}.

\begin{proposition}
\label{prop:explictMapLiftsToGenerator}
Suppose that $R \subseteq S$ is a generically finite and separable
local extension of DVR's with associated extension of fraction fields
$K \subseteq L$.  Further suppose that the (not necessarily finite)
extension $R \subseteq S$ is tamely ramified.  Then there exists an $e
> 0$ and $\phi \in \Hom_R(R^{1/p^e}, R)$ which extends to an element
$\bar{\phi} \in \Hom_S(S^{1/p^e}, S)$ generating $\Hom_S(S^{1/p^e},
S)$ as an $S^{1/p^e}$-module.  Furthermore, we will explicitly see
that if $\pi \: Y =\Spec S \to X =\Spec R$ is the induced map, then $\pi^*
\Delta_{\phi} = \Ram_{\pi}$.
\end{proposition}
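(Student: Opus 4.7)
The plan is to exploit tame ramification to produce an explicit $\phi$ by projection onto a carefully chosen basis element, then verify both extension and the divisor identity by direct calculation. Let $n$ be the ramification index, so $\gcd(n,p) = 1$, and fix local parameters $r, s$ of $R, S$ with $r = ws^n$ for some $w \in S^\times$. I will choose $e > 0$ with $p^e \equiv 1 \pmod n$ and write $p^e = qn + 1$. Next, select $a_1, \ldots, a_m \in R$ whose $p^e$-th roots descend to a basis of $k^{1/p^e}$ over $k = R/\bp$. Tame ramification forces $k \subseteq \ell = S/\bq$ to be separable, so linear disjointness (as in the proof of Lemma \ref{lem:genericextension}) shows these same elements also yield a basis of $\ell^{1/p^e}$ over $\ell$. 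Lemma \ref{lem:bases} then produces compatible $R$- and $S$-bases $\{a_j^{1/p^e} r^{i/p^e}\}$ and $\{a_j^{1/p^e} s^{i/p^e}\}$ of $R^{1/p^e}$ and $S^{1/p^e}$, respectively, indexed by $0 \leq i \leq p^e-1$ and $1 \leq j \leq m$.

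With bases fixed, I define $\phi \in \Hom_R(R^{1/p^e}, R)$ by sending $a_{j_0}^{1/p^e} r^{q/p^e}$ to $1$ for some fixed $j_0$ and all other basis elements to $0$. The core of the argument is to evaluate the unique generic extension $\bar\phi \colon L^{1/p^e} \to L$ of $\phi$ on the $S$-basis of $S^{1/p^e}$. Since $\gcd(n, p^e) = 1$, for each $i$ there is a unique $i' \in \{0, \ldots, p^e-1\}$ with $n i' \equiv i \pmod{p^e}$; set $k_i = \lfloor n i'/p^e \rfloor \geq 0$. The substitution $r = ws^n$ gives
\[
a_j^{1/p^e} r^{i'/p^e} = s^{k_i} \cdot (a_j w^{i'})^{1/p^e} s^{i/p^e},
\]
so by $L$-linearity
\[
\bar\phi\bigl((a_j w^{i'})^{1/p^e} s^{i/p^e}\bigr) = s^{-k_i}\, \phi\bigl(a_j^{1/p^e} r^{i'/p^e}\bigr),
\]
which vanishes unless $(j, i') = (j_0, q)$, in which case $i = p^e - 1$, $k_i = 0$, and the value is $1$. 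Each $(a_j w^{i'})^{1/p^e}$ is a unit in $S^{1/p^e}$, so Lemma \ref{lem:bases} gives that $\{(a_j w^{i'})^{1/p^e} s^{i/p^e}\}$ is itself an $S$-basis of $S^{1/p^e}$, and $\bar\phi$ sends this basis into $\{0, 1\} \subseteq S$. Hence $\bar\phi$ extends, and since it sends a top-degree element to $1$, Lemma \ref{lem:GenMapDescription} shows $\bar\phi$ generates $\Hom_S(S^{1/p^e}, S)$ as an $S^{1/p^e}$-module.

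For the divisor identity, I identify $\phi$ with a multiple of a canonical generator. Take $\Phi \in \Hom_R(R^{1/p^e}, R)$ sending $a_{j_0}^{1/p^e} r^{(p^e-1)/p^e}$ to $1$ and the other basis elements to $0$; this generates and corresponds to the zero divisor on $\Spec R$ under the identification of Section~\ref{sec:Background}. A short check shows $\phi(\blank) = \Phi(r^{(p^e-1-q)/p^e} \cdot \blank)$, so $D_\phi = (p^e - 1 - q)[\bp_R]$ and $\Delta_\phi = \frac{p^e - 1 - q}{p^e - 1}[\bp_R]$. Pulling back via $\pi^*[\bp_R] = n[\bq_S]$ and using $p^e - 1 = qn$ together with $p^e - 1 - q = q(n-1)$, we obtain $\pi^* \Delta_\phi = (n-1)[\bq_S] = \Ram_\pi$.

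The main obstacle is the change-of-basis calculation in the second paragraph: one must verify that the fractional $s$-adic exponents arising after the substitution $r = ws^n$ conspire so that $\bar\phi$ lands in $S$, and specifically so that the pivot element $a_{j_0}^{1/p^e} r^{q/p^e}$ corresponds to a top-degree $s$-basis element. The congruence $p^e \equiv 1 \pmod n$ was chosen precisely to make $nq = p^e - 1$, aligning $r^{q/p^e}$ with $w^{q/p^e} s^{(p^e-1)/p^e}$. This is exactly where the hypothesis $\gcd(n, p) = 1$---the defining feature of tame ramification---becomes essential, since otherwise no such $e$ would exist.
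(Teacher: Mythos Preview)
Your proof is correct and follows essentially the same route as the paper's: choose $e$ with $n \mid p^e-1$, let $\phi$ project onto $a_{j_0}^{1/p^e} r^{q/p^e}$ with $q=(p^e-1)/n$, substitute $r=ws^n$ to rewrite the $R$-basis as an $S$-basis of $S^{1/p^e}$, and check termwise that $\bar\phi$ sends this basis into $S$ with the top-degree element going to $1$. One minor slip: the elements $(a_j w^{i'})^{1/p^e}$ need not be units in $S^{1/p^e}$ (the $a_j$ may fail to be units), but what Lemma~\ref{lem:bases} actually requires---that their images form an $\ell$-basis of $\ell^{1/p^e}$---follows from your earlier observation that the $\bar a_j^{1/p^e}$ already do, since multiplication by the unit $\bar w^{i'/p^e}$ is an $\ell$-linear automorphism of $\ell^{1/p^e}$.
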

\begin{proof}
Fix $r$ to be a parameter for $R$ and $s$ to be a parameter for $S$
and write $r = u s^n$ for some unit $u \in S$.  Here $n$ is the
ramification index of $R \subseteq S$ and so $p$ does not divide $n$.
Set $e$ to be a number such that $n$ divides $(p^e - 1)$ and fix $b =
(p^e - 1)/n$.  Let $k=R/rR$ and $l = S/sS$, and fix a basis
$\overline{u_1}^{1/p^e}, \dots, \overline{u_m}^{1/p^e}$ for $k^{1/p^e}$ over $k$
(with corresponding lifts to elements $u_i \in R$).

The following elements form a basis for $R^{1/p^e}$ over $R$ by Lemma \ref{lem:bases}.
\begin{equation}
  \label{eq:oldbasis}
\begin{array}{c}
u_1^{1/p^e}, \ldots, u_m^{1/p^e}\\
u_1^{1/p^e} r^{1/p^e}, \ldots, u_m^{1/p^e} r^{1/p^e}\\
\dots\\
u_1^{1/p^e} r^{(p^e - 1)/p^e}, \ldots, u_m^{1/p^e} r^{(p^e - 1)/p^e}
\end{array}
\end{equation}
Set $\phi$ to be the map that sends $u_1^{1/p^{e}} r^{b/p^e}$ to $1$ and all other basis elements to zero.
Now consider the following set of elements of $S^{1/p^e}$.
\begin{equation}
\label{eq:newbasis}
\begin{array}{c}
u_1^{1/p^e}, \ldots, u_m^{1/p^e}\\
u_1^{1/p^e} u^{1/p^e} s^{(n \text{ mod } p^e)/p^e}, \ldots, u_m^{1/p^e} u^{1/p^e} s^{(n \text{ mod } p^e)/p^e}\\
\dots\\
u_1^{1/p^e} u^{(p^e - 1)/p^e} s^{(n(p^e - 1) \text{ mod } p^e)/p^e}, \ldots, u_m^{1/p^e} u^{(p^e - 1)/p^e} s^{(n(p^e - 1) \text{ mod } p^e)/p^e}
\end{array}
\end{equation}
Here we have used $(a \text{ mod } b)$ to denote the unique integer $0
\leq t \leq b-1$ such that $b |(a-t)$.

Our immediate goal will be to show that \eqref{eq:newbasis} is a basis
for $S^{1/p^e}$ over $S$.  Note that, since $p$ is relatively prime to
$n$, the integers $(ni \text{ mod } p^e)$ as we vary $i$ are precisely
$0, \ldots, p^{e}-1$.
By Lemma~\ref{lem:bases}, it is sufficient to show for a fixed integer
$i$ that the images of the $u_j^{1/p^e} u^{i/p^e}$ in $l^{1/p^e}$ (as
$j$ varies) are a basis for $l^{1/p^e}$ over $l$.  But that is obvious
since the $u_j^{1/p^e}$ form such a basis (since $k \subseteq l$ is
separable and using Lemma~\ref{lem:genericextension}) and multiplication by $u^{i/p^e}$ induces a vector space
automorphism of $l^{1/p^e}$ over $l$.

We know that $\phi$ has a unique extension $\bar{\phi} \in
\Hom_L(L^{1/p^e}, L)$ (first extending generically, then using
Lemma~\ref{lem:genericextension}).  Let us verify that
$\bar{\phi}(S^{1/p^{e}}) \subseteq S$ using the basis \eqref{eq:newbasis}.
Clearly, $\bar{\phi}$ sends $u_1^{1/p^e} u^{b/p^e} s^{(p^e - 1)/p^e} =
u_1^{1/p^{e}} r^{b/p^e}$ to $1$, and it is easy to see  that all other
elements of \eqref{eq:newbasis} must be sent to zero by construction.
Indeed, each of these elements agrees after multiplying by an integer
power of $s$ with an element of \eqref{eq:oldbasis} that is killed by
$\phi$.  This completes the proof of the main statement.  For the last
statement, note that the map $\Phi \in \Hom_R(R^{1/p^e}, R)$ which
sends $u_1^{1/p^e} r^{(p^e - 1)/p^e}$ to 1 and all other
elements of \eqref{eq:oldbasis} to 0 is a $R^{1/p^e}$-module generator
by Lemma~\ref{lem:GenMapDescription}.  As $\phi(\blank) =
\Phi(r^{(p^{e}-1-b)/p^{e}} \cdot \blank)$, we have
\[
\Delta_{\phi} = \frac{1}{p^{e}-1} \divisor_{R}(r^{(p^{e}-1-b)}) \, \, .
\]
Thus, we compute
\begin{eqnarray*}
\pi^{*} \Delta_{\phi} &=& \frac{1}{p^{e}-1} \divisor_{S}(r^{(p^{e}-1-b)})
= \frac{1}{p^{e}-1} \divisor_{S}(u^{(p^{e}-1-b)}s^{n(p^{e}-1-b)}) \\
&=& \frac{1}{p^{e}-1} \Div_S(s^{(n-1)(p^e-1)}) = \Div_S(s^{n-1}) =
\Ram_{\pi}
\end{eqnarray*}
as desired.
\end{proof}

Using the above Lemma, we now easily obtain the following criterion for extending
\mbox{$p^{-e}$-linear} maps over tamely ramified extensions of DVR's.

\begin{corollary}
\label{cor:tamelyRamifiedLifting}
In the context of Proposition \ref{prop:explictMapLiftsToGenerator}, a map $\psi \in \Hom_R(R^{1/p^e}, R)$ extends to a map $\bar{\psi} \in \Hom_S(S^{1/p^e}, S)$ if and only if $\Delta_{\psi} \geq \Delta_{\phi}$ (or in other words, if and only if $\pi^* \Delta_{\psi} \geq \Ram_{\pi}$).  Furthermore in this case, $\Delta_{\bar{\psi}} = \pi^* \Delta_{\psi} - \Ram_{\pi}$.
\end{corollary}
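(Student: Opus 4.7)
The plan is to use the map $\phi$ from Proposition \ref{prop:explictMapLiftsToGenerator} as a \emph{line in the sand}: a single scalar $c \in K$ will simultaneously measure the discrepancy $\Delta_\psi - \Delta_\phi$ on $X$ and obstruct the extension of $\psi$ to $Y$. Since $\phi$ is nonzero and $\Hom_K(K^{1/p^e},K)$ is a one-dimensional $K^{1/p^e}$-vector space, identified with $\Hom_L(L^{1/p^e},L)$ by Lemma \ref{lem:genericextension}, every $\psi \in \Hom_R(R^{1/p^e},R)$ has a unique generic expression $\psi(\blank) = \phi(c^{1/p^e}\cdot\blank)$ with $c \in K$, and its forced unique extension to $L^{1/p^e}$ is $\bar\phi(c^{1/p^e}\cdot\blank)$. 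Thus $\psi$ extends to $S$ if and only if $\bar\phi(c^{1/p^e}S^{1/p^e}) \subseteq S$, which by Lemma \ref{lem:CharacterizationOfGeneratingMap} applied to $\Phi = \bar\phi$ (valid because $\bar\phi$ generates $\Hom_S(S^{1/p^e},S)$ as an $S^{1/p^e}$-module) is in turn equivalent to $c \in S$.

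Next I would compute $\Delta_\psi$ by factoring through the $R^{1/p^e}$-module generator $\Phi$ of $\Hom_R(R^{1/p^e},R)$ provided by Lemma \ref{lem:GenMapDescription}. Writing $\phi = \Phi(r^{(p^e-1-b)/p^e}\cdot\blank)$ as in the proof of Proposition \ref{prop:explictMapLiftsToGenerator}, one has $\psi = \Phi(c^{1/p^e}r^{(p^e-1-b)/p^e}\cdot\blank)$, and hence
\[
\Delta_\psi \; = \; \Delta_\phi + \frac{1}{p^e-1}\divisor_R(c).
\]
In particular $\Delta_\psi \geq \Delta_\phi$ is equivalent to $c \in R$. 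Because $R \subseteq S$ is an extension of DVR's with ramification index $n$, any $c \in K$ satisfies $\ord_S(c) = n\cdot \ord_R(c)$, so $c \in R$ if and only if $c \in S$. Together with $\pi^*\Delta_\phi = \Ram_\pi$ from Proposition \ref{prop:explictMapLiftsToGenerator}, this yields the three-way equivalence of extendability, $\Delta_\psi \geq \Delta_\phi$, and $\pi^*\Delta_\psi \geq \Ram_\pi$.

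For the final divisor formula, I would again use that $\bar\phi$ generates the free rank-one $S^{1/p^e}$-module $\Hom_S(S^{1/p^e},S)$, giving $\Delta_{\bar\phi} = 0$ and hence $\Delta_{\bar\psi} = \frac{1}{p^e-1}\divisor_S(c)$. Pulling back the formula for $\Delta_\psi$ and using $\pi^*\divisor_R(c) = \divisor_S(c)$ together with $\pi^*\Delta_\phi = \Ram_\pi$ then produces $\Delta_{\bar\psi} = \pi^*\Delta_\psi - \Ram_\pi$. The main obstacle is essentially bookkeeping: extracting the single scalar $c \in K$ that governs both sides and verifying that the two a priori distinct integrality conditions $c \in R$ and $c \in S$ coincide via the ramification index; once these identifications are in place, the equivalences and the divisor formula follow by direct substitution into the setup of Proposition \ref{prop:explictMapLiftsToGenerator}.
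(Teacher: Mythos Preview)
Your argument is correct and follows essentially the same route as the paper: express $\psi(\blank)=\phi(c^{1/p^e}\cdot\blank)$ for a unique $c\in K$, apply Lemma~\ref{lem:CharacterizationOfGeneratingMap} with the generator $\bar\phi$ to see that extendability is equivalent to $c^{1/p^e}\in S^{1/p^e}$, and then identify this with $\Delta_\psi\geq\Delta_\phi$ together with $\pi^*\Delta_\phi=\Ram_\pi$. The only cosmetic difference is that where you invoke $\ord_S(c)=n\cdot\ord_R(c)$ to pass between $c\in S$ and $c\in R$, the paper simply writes $S^{1/p^e}\cap K^{1/p^e}=R^{1/p^e}$; these are the same observation, and your added computation of $\Delta_\psi$ via the generator $\Phi$ just makes explicit what the paper leaves implicit.
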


\begin{proof}
 We always have $\psi(\blank) = \phi(x^{1/p^e} \cdot \blank)$ for some
 $x \in K$, and hence also the extension $\bar{\psi} \in
\Hom_L(L^{1/p^e}, L)$ to $L$ satisfies $\bar{\psi}(\blank) =
\bar{\phi}(x^{1/p^e} \cdot \blank)$.  Using
Lemma~\ref{lem:CharacterizationOfGeneratingMap} , since $\bar{\phi}$
generates $\Hom_S(S^{1/p^e}, S)$, we have $\bar{\phi}(S^{1/p^{e}})
\subseteq S$ if and only if $x^{1/p^e} \in S^{1/p^e} \cap K^{1/p^e} =
R^{1/p^e}$.  This is precisely the statement that $\Delta_{\psi} \geq
\Delta_{\phi}$, and the remaining conclusion follows using
$\Delta_{\bar{\phi}} = 0$ (as it is a generator) and $\pi^*
\Delta_{\phi} = \Ram_{\pi}$.
\begin{comment}
The statement that $\Delta_{\psi} \geq \Delta_{\phi}$ simply means
that there exists $x \in R$ such that $\psi(\blank) = \phi(x^{1/p^e}
\cdot \blank)$.  We always have $\psi(\blank) = \phi(x^{1/p^e} \times
\blank)$ for some $x \in K$ (we will show that $x \in R$).  But then
$\bar{\psi}(\blank) = \bar{\phi}(x^{1/p^e} \times \blank)$ and so
since $\bar{\phi}$ generates $\Hom_S(S^{1/p^e}, S)$, we see by Lemma
\ref{lem:CharacterizationOfGeneratingMap} that $x^{1/p^e} \in
S^{1/p^e} \cap K^{1/p^e} = R^{1/p^e}$ as desired.
\end{comment}
\end{proof}

Let us now complete the alternative elementary proof of the main theorem, in contrast to \cite[Theorem 5.7]{SchwedeTuckerTestIdealFiniteMaps}.

\begin{proof}[Proof of Main Theorem]
It is sufficient to study when $\psi \in \Hom_R(R^{1/p^e}, R)$ can be extended to $\Hom_S(S^{1/p^e}, S)$ where $R \subseteq S$ is a tamely ramified generically finite inclusion of  DVR's. In the case that $e > 0$ is such that the ramification index divides $p^e - 1$, this happens if and only if $\pi^*
\Delta_{\psi} \geq \Ram_{\pi}$ by Corollary
\ref{cor:tamelyRamifiedLifting} above.  More generally, by applying
this criterion to an appropriately large Frobenius iterate
\[
\psi^m := \psi \circ \psi^{1/p} \circ \dots \circ \psi^{1/p^{m-1}}
\]
of $\psi$
and using that $\Delta_{\psi^m} = \Delta_{\psi}$ by \cite[Theorem 3.11(e)]{SchwedeFAdjunction},
we will have
completed our proof
as soon as we have shown the following lemma.
\end{proof}

\begin{lemma}
\label{lem.CompositionExtendsIfAndOnlyIf}
Suppose that $R \subseteq S$ is a finite inclusion of $F$-finite
normal domains.  Then an element $\psi \in \Hom_R(R^{1/p^e}, R)$ extends to an element $\bar{\psi} \in \Hom_S(S^{1/p^e}, S)$ if and only if the composition $\psi^m \in \Hom_R(R^{1/p^{me}}, R)$ extends to a map in $\Hom_S(S^{1/p^{me}}, S)$.
\end{lemma}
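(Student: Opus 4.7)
The forward direction of the lemma is immediate: if $\bar\psi \in \Hom_S(S^{1/p^e}, S)$ extends $\psi$, then iterating with $p^e$-th roots produces an extension $\bar\psi^m := \bar\psi \circ \bar\psi^{1/p^e} \circ \cdots \circ \bar\psi^{1/p^{(m-1)e}} \in \Hom_S(S^{1/p^{me}}, S)$ of $\psi^m$, since both composition and Frobenius pushforward preserve the $S$-linear structure.

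For the reverse direction, I would first reduce to the DVR case via Lemma \ref{lem:reductcodimoneaffine}, so we may assume $R \subseteq S$ is a generically finite separable inclusion of DVRs with fraction fields $K \subseteq L$.  Let $\bar\psi \in \Hom_L(L^{1/p^e}, L)$ denote the unique generic extension of $\psi$ (Lemma \ref{lem:genericextension}); the goal is to verify that $\bar\psi(S^{1/p^e}) \subseteq S$.  Fix a generator $\phi_0 \in \Hom_S(S^{1/p^e}, S)$ via Lemma \ref{lem:GenMapDescription}; its generic extension $\bar\phi_0$ generates $\Hom_L(L^{1/p^e}, L)$ as well.  Writing $\bar\psi(\blank) = \bar\phi_0(c^{1/p^e} \cdot \blank)$ for a unique $c \in L$, Lemma \ref{lem:CharacterizationOfGeneratingMap} (applied with $\Phi = \bar\phi_0$) translates ``$\psi$ extends'' into the condition $c \in S$.

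The heart of the argument is the identity
\[
\bar\psi^m(\blank) \;=\; \bar\phi_0^m\bigl((c^N)^{1/p^{me}} \cdot \blank\bigr), \qquad N := 1 + p^e + p^{2e} + \cdots + p^{(m-1)e} = \frac{p^{me}-1}{p^e-1},
\]
which I would establish by induction on $m$ using the decomposition $\bar\psi^m = \bar\psi^{m-1} \circ \bar\psi^{1/p^{(m-1)e}}$.  At each inductive step, the outer factor $(c^{N_{m-1}})^{1/p^{(m-1)e}}$ from the hypothesis is pulled inside $\bar\phi_0^{1/p^{(m-1)e}}$ using its $L^{1/p^{(m-1)e}}$-linearity, and the exponents of $c$ combine via the relation $N_{m-1} \cdot p^e + 1 = N_m$.

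Granting that $\phi_0^m$ also generates $\Hom_S(S^{1/p^{me}}, S)$ as an $S^{1/p^{me}}$-module, applying Lemma \ref{lem:CharacterizationOfGeneratingMap} to $\bar\phi_0^m$ converts the hypothesis $\bar\psi^m(S^{1/p^{me}}) \subseteq S$ into $c^N \in S$.  Since $S$ is normal (integrally closed in $L$) and $N \geq 1$, this forces $c \in S$, whence $\bar\psi(S^{1/p^e}) = \bar\phi_0(c^{1/p^e} \cdot S^{1/p^e}) \subseteq S$, completing the proof.

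The main obstacle I anticipate is justifying that $\phi_0^m$ is still a generator of $\Hom_S(S^{1/p^{me}}, S)$.  Although this is a familiar fact for Cartier-type generators over regular rings, for self-containedness I would verify it in the DVR setting by explicit induction:  using Lemma \ref{lem:bases} to describe a free basis of $S^{1/p^{me}}$ over $S$ and tracking $\phi_0^m$ term by term, one shows that a suitable ``top'' basis element $v_{j_0,\,p^{me}-1}^{1/p^{me}} \cdot s^{(p^{me}-1)/p^{me}}$ is sent to a unit of $S$, at which point Lemma \ref{lem:GenMapDescription} (with exponent $me$) certifies that $\phi_0^m$ generates.
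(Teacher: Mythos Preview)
Your proof is correct and follows essentially the same route as the paper: reduce to dimension one, write $\bar\psi = \Phi(l^{1/p^e}\cdot\blank)$ for a generator $\Phi$ of $\Hom_S(S^{1/p^e},S)$, observe that $\bar\psi^m = \Phi^m(l^{a/p^{me}}\cdot\blank)$ for some $a>0$, and use normality of $S$ to pass from $l^a\in S$ back to $l\in S$. The paper simply asserts $l^{a/p^{me}}\in S^{1/p^{me}}$ without comment---implicitly using that $\Phi^m$ generates $\Hom_S(S^{1/p^{me}},S)$, which is exactly the point you flagged as the main obstacle---so your extra care there is warranted but does not amount to a different approach.
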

\begin{proof}
The forward implication is obvious, so we prove only the reverse.
Without loss of generality we may assume that $R$ is a DVR and $S$ is
a semi-local regular ring of dimension one, and set $L= \Frac S$.
Suppose that $\psi^m$ extends to a map $\overline{\psi^{m}} \in
\Hom_S(S^{1/p^{me}}, S)$, and set $\Phi$ to be a $S^{1/p^{e}}$-module
generator of $\Hom_S(S^{1/p^e}, S)$.  We can always extend $\psi$
 to $\bar{\psi} \in \Hom_L(L^{1/p^e}, L)$ and write $\bar{\psi}(\blank) =
\Phi(l^{1/p^e} \cdot \blank)$ for some $l \in L$. We will show that $l
\in S$ which will complete the proof by Lemma \ref{lem:CharacterizationOfGeneratingMap}.  Note that
$\overline{\psi^{m}}(\blank) = (\bar{\psi})^m (\blank)$ is simply
$\Phi^m(l^{a/p^{me}} \cdot \blank)$ for some integer $a >
0$. Therefore $l^{a/p^{me}} \in S^{1/p^{me}}$, whence $l \in S$ since $S$ is normal.
\end{proof}

%Our next example has wild ramification in codimension 1 because an extension of residue fields is non-separable.
\section{An example with wild ramification}
\label{sec.WildRamification}
We conclude this paper with an example showing the explicit extensions criteria can sometimes be obtained even for wild ramification, although the arguments we used above will not suffice.  In particular, even though the proof of Proposition \ref{prop:explictMapLiftsToGenerator} will not work in the wildly ramified case, there still can be explicit element by element ways to extend maps.  We do not know how to construct them except in an ad-hoc manner, which we do in the following example.

\begin{example}[Explicit extension with wild ramification]
\label{ex:nottame2}
  Suppose $R = \F_{3}[x,y] \subseteq \F_{3}[x,y,z]/\langle z^{3} - xz
  - y^{2} \rangle = S$ and consider $\phi \in \Hom_{R}(R^{1/3},R)$.  Identify
  $\phi$ with its generic extension $\phi \: K^{1/3} \to K$ to $K =
  \Frac(R)$, and denote by $\bar\phi$ the unique extension of $\phi$
  to $L = \Frac(S)$.  Since $\bq
  = xS$ is the only ramified height one prime (and lies over $\bp = xR$), we will check whether
  $\bar\phi((S_{\bq})^{1/3}) \subseteq S_{\bq}$.

  %Note that $R_P \subseteq S_Q$ has wild ramification because the residue field extension is purely inseparable.

As $S_{\bq}$ is a DVR (with uniformizer $x$), it follows that $(S_{\bq})^{1/3}$ is a free
$S_{\bq}$-module.  In fact, one can explicitly check that $\{ x^{i/3}z^{j/3}
\}_{0 \leq i,j \leq 2}$ form a basis.
Since $x^{3}z = x^{2}(xz) = x^{2/3}z^{3} - x^{2}y^{2}$ in $S$, we have
$\bar\phi(xz^{1/3}) = \bar\phi(x^{2/3}z - x^{2/3}y^{2/3})$ or
$  \bar\phi(z^{1/3}) = \frac{1}{x}\left( z
    \phi(x^{2/3}) -\phi(x^{2/3}y^{2/3}) \right)$. Thus, $\bar\phi(z^{1/3}) \in S_{\bq}$ if and only if $ z
    \phi(x^{2/3}) -\phi(x^{2/3}y^{2/3})$ maps to zero in the residue
    field $k(\bq) = \F_{3}(y)[z]/\langle z^{3}-y^{2} \rangle$.  Since
    (the images of) $1,z,z^{2}$ form a basis for
    $k(\bq)$ over $k(\bp)$, we conclude that $\phi(x^{2/3})$ and
    $\phi(x^{2/3}y^{2/3})$ map to zero in $k(\bp)$.

Continuing in this manner leads to the formulae in Table
\ref{tab:nottameformulae}
below and we conclude similarly that
$\bar\phi((S_{\bq})^{1/3})
\subseteq S_{\bq}$ if and only if each of the elements $\{ \phi(x^{i/3}y^{j/3}) \}_{1
\leq i,j \leq 2}$ map to zero in $k(\bp)$.  As
$\{x^{i/3}y^{j/3}\}_{0\leq i,j \leq 2}$ form a free basis for
$(R_{\bp})^{1/3}$ over $R_{\bp}$, one can verify explicitly that these
conditions are
equivalent to the concise statement $\ord_{R_{\bp}}(\Delta_{\phi}) \geq 1$.
\end{example}

\begin{table}[h]
\begin{equation*}
  \begin{array}{@{\quad}c@{\quad}|@{\quad}c@{\quad}}
     \hline & \\[-1.7ex]
    u & \mbox{$\bar\phi(u)$} \\
    \hline & \\[-1.7ex]
    z^{1/3} & \frac{1}{x}\left( z
    \phi(x^{2/3}) -\phi(x^{2/3}y^{2/3}) \right) \\[3pt]
    x^{1/3}z^{1/3} & z \phi(1) - \phi(y^{2/3})\\[3pt]
    x^{2/3}z^{1/3} & z \phi(x^{1/3}) - \phi(x^{1/3}y^{2/3})\\[3pt]
    z^{2/3} & \frac{1}{x} \left( z^{2}\phi(x^{1/3}) + z
      \phi(x^{1/3}y^{2/3}) + y \phi(x^{1/3}y^{1/3}) \right)\\[3pt]
    x^{1/3}z^{2/3} & \frac{1}{x} \left( z^{2} \phi(x^{2/3}) + z
      \phi(x^{2/3}y^{2/3}) + y \phi(x^{1/3}y^{1/3}) \right)\\[3pt]
    x^{2/3}z^{2/3} & z^{2}\phi(1)+z\phi(y^{2/3}) + y\phi(y^{1/3})\\[3pt]
\hline
  \end{array}
\end{equation*}
\caption{Extension formulae in Example \ref{ex:nottame2}.}
\label{tab:nottameformulae}
\end{table}

\bibliographystyle{skalpha}
\bibliography{CommonBib}  %%%-or upload your own bib file

\end{document}